\newtheorem{theorem}{Theorem}
\newtheorem{proposition}{Proposition}
\theoremstyle{remark}
\newtheorem{remark}{Remark}
\newtheorem{example}{Example}
\renewcommand{\d}{{\mathrm d}}
\newcommand{\qede}{\hspace*{\fill}$\Diamond$\medskip}
\def \i1{\int_0^1}
\def \f#1#2{{#1\over{#2}}}
\def \s#1{\sqrt{#1}}
\def \fr{\textstyle\f}
\def \lm{\lambda}
\def \k{\left(\f{K'}{K}\right)^{s-1}\d k}
\begin{document}

\title{Integrals of $K$ and $E$ from Lattice Sums} 

\author{J. G. Wan}
\address{Engineering Systems and Design, Singapore University of Technology and Design, 20 Dover Drive Singapore 138682}
\email{james\_wan@sutd.edu.sg}

\author{I. J. Zucker}
\address{Department of Physics, Kings College London, Strand, London WC2R 2LS, UK}
\email{jzucker@btinternet.com}

\date{\today}

\subjclass[2010]{Primary 11M06, 33C75, 33E05; Secondary 11F03, 33C20}
\keywords{Complete elliptic integral, lattice sum, Jacobi theta function, Dirichlet $L$-series, Gamma function, hypergeometric series, Mellin transform}

\begin{abstract}
We give closed form evaluations for many families of integrals, whose integrands contain algebraic functions of the complete elliptic integrals $K$ and $E$. Our methods exploit the rich structures connecting complete elliptic integrals, Jacobi theta functions,  lattice sums, and Eisenstein series. Various examples are given, and along the way new (including 10-dimensional) lattice sum evaluations are produced.

\end{abstract}

\maketitle
%==================================================

\section{Introduction}
\label{sec-intro}

The complete elliptic integrals of the first and second kind, $K$ and $E$ respectively, are defined by
\begin{align} \nonumber 
K(k) & =\i1\f{\d x}{\s{(1-x^2)(1-k^2x^2)}}  = \frac{\pi}{2}\, {_2}F_1\left(\frac{1}{2},\frac{1}{2};1;k^2\right), \\
 E(k) & =\i1\s{\f{1-k^2x^2}{1-x^2}}\d x =\frac{\pi}{2}\, {_2}F_1\left(-\frac{1}{2},\frac{1}{2};1;k^2\right). \label{eq1.1}
\end{align}
As usual, $_pF_q$ denotes the generalized hypergeometric function \cite[Ch.~2]{aar}. In equation \eqref{eq1.1}, $k$ is known as the modulus;  the complimentary functions $K'$ and $E'$ are the same functions with argument $k' = \sqrt{1-k^2}$. The derivatives of $K$ and $E$ can be expressed as follows:
\begin{equation} \label{derivke}
\frac{\mathrm d K(k)}{\mathrm d k} = \frac{E(k)-k'^2 K(k)}{k k'^2}, \quad \frac{\mathrm d E(k)}{\mathrm d k} = \frac{E(k)-K(k)}{k}.
\end{equation}

In recent years, integrals of the form
\begin{equation} \label{eq1.2}
\i1 f(k)\, K^mK'^nE^pE'^r \d k
\end{equation}
have appeared in the context of multi-loop Feynman diagrams, Ising-type
integrals \cite{bbbg}, random walks \cite{walk2}, Mahler measures, and non-critical $L$-values of modular forms \cite{prs, ramazeta}.  More intricate integrals containing $K$ also appear in connection with lattice Green's functions \cite{Lbook, watson}. It is thus our aim to evaluate  the integrals \eqref{eq1.2} whenever possible.

This is in general challenging. For instance, it was experimentally discovered in \cite{wan} that
\begin{equation}\label{first}
 \int_0^1 K'(k)^3 \d k =  5 \int_0^1 k K'(k)^3 \d k = \frac{\Gamma^8\left(\frac14\right)}{128\pi^2}.
\end{equation}
Despite its simplicity in form, \eqref{first} was only proved in \cite{rwz} and \cite{zhou}; for the latter proof, Zhou developed, among other things, a generalization of the classical Clebsch-Gordan coefficients.

In this paper, we show how infinite families of such integrals may be found in closed forms when certain multiple (lattice) sums are expressed as Mellin transforms of products of Jacobi $\theta$ functions.  To this end, we draw upon the deep connections between $\theta$ functions and complete elliptic integrals \cite[Ch.~2]{AGM}.
The Jacobi $\theta$ functions are
\begin{align} \nonumber
\theta_2(q)&=\sum_{n=-\infty}^{\infty} q^{(n-1/2)^2}=\s{\f{2kK(k)}{\pi}}, \\ \nonumber
\theta_3(q)&=\sum_{n=-\infty}^{\infty} q^{n^2}=\s{\f{2K(k)}{\pi}}, \\
\theta_4(q)&=\sum_{n=-\infty}^{\infty}(-1)^n q^{n^2}=\s{\f{2k'K(k)}{\pi}}, \label{eq1.3}
\end{align}
\begin{equation} \label{qderiv}
\text{where} \qquad  q=e^{-\pi K'(k)/K(k)}, \quad \text{and} \quad \frac{\d q}{\d k} = \frac{\pi^2 q}{2kk'^2 K(k)^2}. 
\end{equation}

Said multiple sums can be expressed as products of Dirichlet $L$-series with real characters \cite[Ch.~4]{Lbook}. Explicitly, the series that occur here are
\begin{align} \nonumber
\zeta(s) & =1+2^{-s}+3^{-s}+4^{-s}+\cdots, \quad \beta(s)=1-3^{-s}+5^{-s}-7^{-s}+\cdots, \\ \nonumber
\lm(s) &= (1-2^{-s})\zeta(s), \qquad \qquad \qquad \ \ \eta(s) =(1-2^{1-s})\zeta(s), \\ \nonumber
L_{-3}(s) & = 1 -2^{-s}+4^{-s}-5^{-s}+\cdots, \\  \nonumber
L_{-8}(s) & =1+3^{-s}-5^{-s}-7^{-s}+\cdots, \quad L_{8}(s) =1-3^{-s}-5^{-s}+7^{-s}+\cdots, \\ \nonumber
L_{12}(s) & =1-5^{-s}-7^{-s}+11^{-s}+\cdots, \\ \nonumber
L_{-24}(s) & =  1+5^{-s}+7^{-s}+11^{-s}-13^{-3}-17^{-s}-19^{-s}-23^{-s}+\cdots, \\
L_{24}(s) & =  1+5^{-s}-7^{-s}-11^{-s}-13^{-3}-17^{-s}+19^{-s}+23^{-s}+\cdots . \label{eq1.4}
\end{align}
The first four are respective the Riemann zeta,  Dirichlet beta, Dirichlet lambda, and Dirichlet eta functions; $\beta(2)$ is Catalan's constant and is denoted by $G$.

In the next section, we utilize the connecting formulae \eqref{eq1.3} and \eqref{qderiv} and known lattice sum evaluations to produce new $K$ integrals. In sections \ref{sec:eisen} and \ref{sec:mod}, we evaluate more $K$ integrals using Eisenstein series and some theory of modular forms; these original results are summarized in Theorems \ref{thmkeven} and \ref{thmkodd}. In the final section, we also look at integrals containing $E$.

\section{Transformation into lattice sums}

We begin with an example  of converting a multiple sum into an equation of the form \eqref{eq1.2}.
\begin{example} \label{eg1}
Consider the double sum
\begin{equation} \label{eq1.5}
\sum_{m_1,m_2}\f{(-1)^{m_2}}{\left[(m_1-1/2)^2+m_2^2\right]^s}, 
\end{equation}
where the summation is performed over all values of the indices from $-\infty$ to $\infty$. The normalized \textit{Mellin} transform operator is
\begin{equation}  \label{mellin}
M_s[f(t)]=\f{1}{\Gamma(s)}\int_0^{\infty}f(t)t^{s-1}\d t, 
\end{equation}
in particular, $M_s[e^{-Nt}] = N^{-s}$. Accordingly, equation \eqref{eq1.5} may be written as
\begin{align} \nonumber
 \sum_{m_1,m_2} \f{(-1)^{m_2}}{\left[(m_1-1/2)^2+m_2^2\right]^s} &= M_s \Big[ \sum_{m_1,m_2} e^{-(m_1-1/2)^2t}(-1)^{m_2}e^{-m_2^2t} \Big] \\
&=\f{1}{\Gamma(s)}\int_0^\infty \theta_2(e^{-t})\theta_4(e^{-t})t^{s-1}\d t. \label{eq1.6}
\end{align}
In \eqref{eq1.6}, let $t=\pi K'(k)/K(k)$, and substitute in the expressions in \eqref{eq1.3} and \eqref{qderiv}. We obtain
\begin{equation} \label{eq1.7}
\sum_{m_1,m_2}\f{(-1)^{m_2}}{\left[(m_1-1/2)^2+m_2^2\right]^s}=\f{\pi^s}{\Gamma(s)}\i1 
 \f{1}{k^{1/2} k'^{3/2}K(k)}\left(\f{K'(k)}{K(k)}\right)^{s-1}\d k.
\end{equation}
Now it has been shown in \cite{zucker} that
\begin{equation} \label{eq1.8}
\sum_{m_1, m_2} \f{(-1)^{m_2}}{\left[(m_1-1/2)^2+m_2^2\right]^s}=2^{2s+1}L_{-8}(s)L_8(s),
\end{equation}
which is equivalent to the Mellin transform of the identity 
\[ \theta_2(q)\theta_4(q) = 2q^{1/4}\sum_{n=0}^\infty (-1)^n \bigg[\frac{q^n}{1+q^{4n+1}}-\frac{q^{3n+2}}{1+q^{4n+3}}\bigg].\]
Thus, \eqref{eq1.7} simplifies to
\begin{equation} \label{eq1.9}
\i1 \f{1}{k^{1/2} k'^{3/2}K(k)}\left(\f{K'}{K}\right)^{s-1}\d k=\pi^{-s}\Gamma(s)2^{2s+1}L_{-8}(s)L_8(s). 
\end{equation}
For instance, when $s=1$, $L_{-8}(1)=\pi/\s{8}$ and $L_8(1)=\log(1+\s{2})/\s{2}$; so we have
\begin{equation} \label{eq1.10}
\i1 \f{\d k}{k^{1/2} k'^{3/2}K(k)}=2\log(1+\s{2}).
\end{equation}
As results such as \eqref{eq1.8} are readily available in \cite{zucker}, many expressions such as \eqref{eq1.9} may be found. \qede
\end{example}

A more general analysis is now given. Suppose a multiple sum $L(m,n,p;s)$ can be written in terms of a Mellin transform of products of $\theta$ functions thus:
\begin{equation} \label{eq1.11}
L(m,n,p;s)=\f{1}{\Gamma(s)}\int_0^\infty \theta_2^m(e^{-t})\theta_3^n(e^{-t})\theta_4^p(e^{-t})t^{s-1}\d t.
\end{equation}
As we have seen in Example \ref{eg1}, $\theta_2$ corresponds to a component in the multiple sum of the form $1/(N-1/2)^2$, $\theta_3$ corresponds to $1/N^2$, and $\theta_4$ corresponds to $(-1)^N/N^2$. Transforming the integral for $L(m,n,p;s)$ using \eqref{eq1.3} and \eqref{qderiv} as in Example \ref{eg1}, we obtain the following general result.
\begin{proposition} \label{prop1}
When the integral on the right converges, the following equation holds, and provides an analytic continuation for the sum $L(m,n,p;s)$. (The argument of $K$ or $K'$ is omitted when it is clear from the context.)
\begin{equation} \label{eq1.12}
L(m,n,p;s)=\f{\pi^s}{\Gamma(s)}\left(\f{2}{\pi}\right)^{\fr{m+n+p-2}{2}}\i1 k^{\fr{m-2}{2}}k'^{\fr{p-4}{2}}
K^{\fr{m+n+p-2s-2}{2}}K'^{s-1}\d k.
\end{equation}
As a consequence of the fact that the Poisson transform of $\theta_2^m(e^{-t})\theta_3^n(e^{-t})\theta_4^p(e^{-t})$ is $\theta_2^p(e^{-t})\theta_3^n(e^{-t})\theta_4^m(e^{-t})$, we also have
\begin{equation} \label{eq1.13}
L(m,n,p;s)=\f{\pi^s}{\Gamma(s)}\left(\f{2}{\pi}\right)^{\fr{m+n+p-2}{2}}\i1 k^{\fr{p-2}{2}}k'^{\fr{m-4}{2}}
K'^{\fr{m+n+p-2s-2}{2}}K^{s-1}\d k.
\end{equation}
\end{proposition}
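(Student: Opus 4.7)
The plan is to derive \eqref{eq1.12} by a direct change of variables from the Mellin integral \eqref{eq1.11}, and then obtain \eqref{eq1.13} either by a further substitution $k\mapsto k'$ or by invoking the Jacobi imaginary transformation of the theta functions.

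First I would substitute $t=\pi K'(k)/K(k)$ in the Mellin representation \eqref{eq1.11}. Since $q=e^{-t}$, the product of theta functions collapses via \eqref{eq1.3} to
\[
\theta_2^m\,\theta_3^n\,\theta_4^p=\Bigl(\tfrac{2K}{\pi}\Bigr)^{(m+n+p)/2}k^{m/2}k'^{p/2}.
\]
The Jacobian comes from \eqref{qderiv}: since $dq/q=-dt$ and $dq/dk=\pi^2 q/(2kk'^2 K^2)$, one gets $dt=-\pi^2/(2kk'^2K^2)\,dk$. The endpoints $t\colon 0\to\infty$ correspond to $k\colon 1\to0$, and the sign absorbs the limit flip. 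Plugging in $t^{s-1}=(\pi K'/K)^{s-1}$ and collecting powers of $\pi$, $K$, $K'$, $k$, $k'$ yields \eqref{eq1.12} after an elementary bookkeeping check; in particular, the $-4$ in the exponent of $k'$ (versus $-2$ in that of $k$) comes from the extra $1/k'^2$ in the Jacobian.

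For \eqref{eq1.13}, the cleanest route is the change of variables $k\mapsto k'$ in the integral \eqref{eq1.12}. Setting $u=k'$, so that $u'=k$, $K(k)=K'(u)$, $K'(k)=K(u)$ and $dk=-(u/u')\,du$, the integrand transforms into the one on the right of \eqref{eq1.13} (with the roles of $m$ and $p$ interchanged in the algebraic factors), while the prefactor is unaffected. Conceptually, this change of variables is exactly the image under $t\mapsto\pi^2/t$ of the Mellin integral \eqref{eq1.11}, and by Jacobi's imaginary transformation
\[
\theta_2(e^{-\pi^2/t})=\sqrt{t/\pi}\,\theta_4(e^{-t}),\quad \theta_3(e^{-\pi^2/t})=\sqrt{t/\pi}\,\theta_3(e^{-t}),\quad \theta_4(e^{-\pi^2/t})=\sqrt{t/\pi}\,\theta_2(e^{-t}),
\]
this corresponds precisely to the Poisson-transform statement in the proposition.

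The substantive content here is really just the first derivation; the second formula follows from it by a symmetry. The one piece of care required is the bookkeeping around signs and exponents in the substitution $t=\pi K'/K$, and keeping straight the conversion from the Jacobian $dt=-\pi^2/(2kk'^2K^2)\,dk$ to the final exponents of $K$, $K'$, $k$, and $k'$ in \eqref{eq1.12}; everything else is routine.
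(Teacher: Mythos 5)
Your proposal is correct and follows essentially the same route as the paper: the substitution $t=\pi K'/K$ with the dictionary \eqref{eq1.3} and the Jacobian from \eqref{qderiv} gives \eqref{eq1.12}, and your bookkeeping (including the extra $k'^{-2}$ producing the $(p-4)/2$ exponent) checks out. Your derivation of \eqref{eq1.13} via the substitution $k\mapsto k'$ is just the hands-on form of the Poisson/Jacobi imaginary transformation argument the paper invokes, as you yourself note, so there is nothing substantively different here.
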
 \smallskip

Equating the two results in Proposition \ref{prop1} gives
\begin{equation} \label{prop1eq} \pi^{(m+n+p-4s)/2}L(m,n,p;s) = L(p,n,m;(m + n + p- 2s)/2).\end{equation}
Moreover, writing $k'^N$ as $k'^{N-2}-k^2 k'^{N-2}$ and applying \eqref{eq1.12} to each term, we get the beautiful equation
\[ L(m,n,p+4;s)-L(m,n+4,c;s) + L(m+4,n,p;s) = 0, \]
which is Jacobi's celebrated result  $\theta_3^4=\theta_2^4+\theta_4^4$ in disguise.

In Table \ref{table1}, results obtained using Proposition \ref{prop1} and sums of up to dimension 8 are given; the lattice sum evaluations come from  $\theta$ function identities recorded in \cite{AGM, zucker}, and \cite[Ch.~1 and 6]{Lbook}.   A selection of some interesting formulae which emerge are shown below.  

\begin{example}
Taking $s=m=p=2$, $n=0$ in Proposition \ref{prop1}, we obtain
\[ \int_0^1 \frac{K'}{K} \frac{\d k}{k'}= 2G. \] 
The lattice sum involved is $\sum_{m_i} (-1)^{m_3+m_4}[(m_1-1/2)^2+(m_2-1/2)^2+m_3^2+m_4^2]^{-2}$ and the final evaluation comes from \cite{zucker}. 

Taking $m=2, \, n=0, \, p=4$ and $s=3/2$ in Proposition \ref{prop1}, we obtain
\[ \int_0^1 \sqrt{KK'}\mathrm dk = \sqrt2 \, \beta\Bigl(\frac32\Bigr)\lambda\Bigl(\frac32\Bigr), \]
with \cite{zucker} again giving the final evaluation. 
\qede
\end{example}

\begin{example}
From the 8-fold sum \cite{zucker} $L(4,0,4;s) = $ 
\begin{align*}
& \sum_{m_i}\f{(-1)^{(m_5+m_6+m_7+m_8)}}{\left[(m_1-\fr{1}{2})^2+(m_2-\fr{1}{2})^2+(m_3-\fr{1}{2})^2+(m_4-\fr{1}{2})^2+m_5^2+m_6^2+m_7^2+m_8^2\right]^s} \\
= & \ 16\eta(s-3)\lambda(s), 
\end{align*}
we derive
\begin{equation} \label{t2t4} \i1 kK^2\left(\f{K'}{K}\right)^{s-1} \d k=2\pi^{3-s}\Gamma(s)\eta(s-3)\lambda(s), \end{equation}
which yields
\[ \i1 kK^2 \d k=\f{7}{4}\zeta(3), \ \ {\int_0^1} kKK' \d k= \f{\pi^3}{16},\ \ \i1 kK'^2 \d k=\f{7}{4}\zeta(3),\ \
\i1 k \f{K'^3}{K} \d k=\f{\pi^3\log 2}{8}. \]
It is interesting that \cite{wan} gives the first three results  above by entirely different methods. \qede
\end{example}

\begin{remark}
In Proposition \ref{prop1}, if $m=0$, then for convergence reasons we could replace the integrand in \eqref{eq1.11} by $(\theta_3^n(e^{-t})\theta_4^p(e^{-t})-1)t^{s-1}$. The same steps could be used to convert this into a $K$ integral, though the final result is not as elegant. As an example, when $n=s=2$ and $m=p=0$, 
\[ \int_0^1 \frac{K (2K'-\pi)}{k  k'^2 K'^3}\mathrm dk = \frac{2}{\pi^2} \sum_{(m_1,m_2) \ne 0} \frac{1}{(m_1^2+m_2^2)^2}=\frac{4G}{3},\]
the second equality being a special case of the Hardy-Lorenz sum.

Instead of subtracting by 1 as above, we could alternatively subtract two sums, in both of which $m=0$. An example of this can be found in Table \ref{table1}, see the entry $\theta_4^6-\theta_3^2\theta_4^4$. 
\qede
\end{remark}

The next example uses known $K$ integrals to provide a new lattice sum evaluations; this idea is further explored subsequently.
\begin{example}
 Proposition \ref{prop1} equates the value of the multiple sum $L(2,0,6;3)$ to the integral of $k' K'^2$, which can be found using the Fourier series method in \cite[Sec.~6]{wan}. We thereby obtain the 8-dimensional sum evaluation
\[  \sum_{m_i} \frac{(-1)^{m_3+m_4+\cdots+m_8}}{\big[(m_1-\frac12)^2+(m_2-\frac12)^2+m_3^2+m_4^2+\cdots+m_8^2\big]^3} = \frac{\pi^3}{4}\biggl[1+2\, {_4F_3}\biggl({{\frac12,\frac12,\frac12,\frac12}\atop{1,1,1}};1\biggr)\biggr]. \] 
As a more involved example, it is possible to show that 
\[ \frac14 L\Big(\frac52,0,\frac72;2\Big)=\int_0^1 \Big(\frac{k}{k'}\Big)^{3/4}K\mathrm dk = \frac{\pi^2}{12}\sqrt{5+\frac{1}{\sqrt2}}. \]
This leads to the lattice sum
\[ \sum_{m_i} \frac{(-1)^{m_3+m_4+m_5+m_6}}{\big[(m_1-\frac12)^2+(m_2-\frac12)^2+2(m_3-\frac14)^2+m_4^2+m_5^2+m_6^2\big]^2} = \frac{\pi^2}{6}\sqrt{10+\sqrt2}, \]
where we have applied the $\theta$ function identities \cite[equation (1.3.8)]{Lbook}.
\qede
\end{example}

\subsection{Relationships between lattice sums}

Proposition \ref{prop1} leads to various non-trivial relations among lattice sums, the next result being one useful example.
\begin{proposition} \label{prop2} The following relationships hold, when $L$  is viewed as an analytic continuation  (if necessary) provided by Proposition \ref{prop1}:
\begin{align} \label{prop2eq1}
L(2m,n,n;s) & = 2^{m-s}\, L(m,m,2n;s) \\
 & = \pi^{2s-m-n} L(n,n,2m; m+n-s).  \label{prop2eq2}
\end{align}
\end{proposition}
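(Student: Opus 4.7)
The plan is to establish the two equalities in turn. The first is a direct consequence of theta-function duplication, while the second follows by combining the first with the functional equation \eqref{prop1eq} of Proposition \ref{prop1}.

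For the first equality I would work directly with the Mellin representation \eqref{eq1.11}. The classical duplication identities
\[ \theta_2(q)^2 = 2\,\theta_2(q^2)\theta_3(q^2), \qquad \theta_3(q)\theta_4(q) = \theta_4(q^2)^2 \]
(both consequences of $2\theta_3(q^2)^2 = \theta_3(q)^2+\theta_4(q)^2$, $2\theta_2(q^2)^2 = \theta_3(q)^2-\theta_4(q)^2$, and Jacobi's $\theta_3^4 = \theta_2^4+\theta_4^4$) combine to give
\[ \theta_2(q)^{2m}\,\theta_3(q)^n\,\theta_4(q)^n \;=\; 2^m\,\theta_2(q^2)^m\,\theta_3(q^2)^m\,\theta_4(q^2)^{2n}. \]
Substituting this into the Mellin integral for $L(2m,n,n;s)$ with $q = e^{-t}$ and then setting $u = 2t$ converts the integrand into that of $L(m,m,2n;s)$; the change of variable contributes $2^{-s}$ via $t^{s-1}\d t = 2^{-s}u^{s-1}\d u$, which combines with the $2^m$ from the theta identity to give the prefactor $2^{m-s}$, proving \eqref{prop2eq1}.

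For the second equality I would apply \eqref{prop1eq} to $L(m,m,2n;s)$. Here $m+n+p = 2(m+n)$, so the transformation sends $L(m,m,2n;s)$ to $L(2n,m,m;\,m+n-s)$ with prefactor $\pi^{2s-m-n}$ (the outer-index swap $m \leftrightarrow p$ reflecting the Poisson effect $\theta_2 \leftrightarrow \theta_4$). Now \eqref{prop2eq1}, reapplied with formal $m$ replaced by $n$, formal $n$ by $m$, and $s$ by $m+n-s$, gives $L(2n,m,m;\,m+n-s) = 2^{s-m}\,L(n,n,2m;\,m+n-s)$. Substituting this back, the $2^{s-m}$ factor cancels the $2^{m-s}$ on the left-hand side of \eqref{prop2eq2}, yielding the claim.

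The only real obstacle is careful bookkeeping: tracking the powers of $2$ and $\pi$ through the substitutions $u = 2t$ and $u = \pi^2/t$, and confirming that any Gamma-factor ratio $\Gamma(s)/\Gamma(m+n-s)$ arising from \eqref{prop1eq} is absorbed into the analytic continuation invoked in the statement of Proposition \ref{prop2}.
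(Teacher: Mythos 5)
Your argument is correct, and for the first equality it takes a genuinely different route from the paper's. The paper proves \eqref{prop2eq1} entirely on the $k$-integral side: starting from \eqref{eq1.13}, it substitutes $k \mapsto (1-k)/(1+k)$, simplifies with the quadratic transformations \eqref{k1}, and recognizes the transformed integral as $2^{m-s}L(m,m,2n;s)$ via \eqref{eq1.12}. You instead stay on the $\theta$-side, using the duplication identities $\theta_2(q)^2=2\theta_2(q^2)\theta_3(q^2)$ and $\theta_3(q)\theta_4(q)=\theta_4(q^2)^2$ together with the linear rescaling $u=2t$ in the Mellin integral. These are two faces of the same modular substitution (the Landen transformation \eqref{k1} is exactly the $k$-side shadow of $q\mapsto q^2$, given the inversion already built into \eqref{eq1.13}), but your version is more economical: the change of variable is trivial and no bookkeeping with powers of $k$, $k'$, $K$ is needed to match the result against \eqref{eq1.12}. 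For \eqref{prop2eq2} both you and the paper rely on \eqref{prop1eq}; the shortest route is to apply it once, directly to $L(2m,n,n;s)$, since the outer swap immediately yields $L(n,n,2m;m+n-s)$ with prefactor $\pi^{m+n-2s}$. Your detour through $L(m,m,2n;s)$ with a second application of \eqref{prop2eq1} is longer but lands in the same place, the powers of $2$ cancelling as you note.

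One substantive remark: your closing caveat about a ratio $\Gamma(s)/\Gamma(m+n-s)$ is not idle. Equating \eqref{eq1.12} with \eqref{eq1.13} actually yields $\Gamma(s)\pi^{-s}L(m,n,p;s)=\Gamma(\sigma)\pi^{-\sigma}L(p,n,m;\sigma)$ with $\sigma=(m+n+p-2s)/2$, so \eqref{prop1eq} as printed is missing the factor $\Gamma(s)/\Gamma(\sigma)$; one can confirm this against the functional equation of $L_{8}(s)L_{-8}(s)$ for the sum \eqref{eq1.8}. The ratio propagates into \eqref{prop2eq2} as $\Gamma(m+n-s)/\Gamma(s)$ (harmless when $s=m+n-s$, and irrelevant to \eqref{prop2eq1}, which is all the concrete examples use). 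Since both your proof and the paper's take \eqref{prop1eq} at face value, this is a defect of the quoted prerequisite rather than of your reasoning, but it should not be ``absorbed into the analytic continuation''\,---\,it is a genuine multiplicative factor.
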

\begin{proof}
Starting with \eqref{eq1.13},
\[ L(2m,n,n;s) = \f{\pi^s}{\Gamma(s)}\left(\f{2}{\pi}\right)^{m+n-1}\i1 k^{\fr{n-2}{2}}k'^{m-2}
K'^{m+n-1-s}K^{s-1}\d k, \]
we make the change of variable $k \mapsto (1-k)/(1+k)$. The new integrand can be simplified using the quadratic transformations \cite[Ch.~1]{AGM}
\begin{equation} \label{k1}
K'(k) = \frac{2}{1+k} K\bigg(\frac{1-k}{1+k}\bigg), \quad K(k) = \frac{1}{1+k}K\bigg(\frac{2\sqrt k}{1+k}\bigg).
\end{equation}
The simplified integral is identified as $2^{m-s}\, L(m,m,2n;s)$,  using \eqref{eq1.12}, thus the equality \eqref{prop2eq1} is obtained. Equation \eqref{prop2eq2} then follows using \eqref{prop1eq}.
\end{proof}

\begin{example}[New lattice sum evaluations] \label{eg5}
Proposition \ref{prop2} shows that 
\[ 2\, L(2,2,2;2) = L(1,1,4;2). \] The value of the left hand side is given in \cite[Example 2]{rwz}, and thus, writing out the right hand side as a convergent 6-dimensional lattice sum, we obtain the apparently new evaluation
\begin{equation} \label{en6}
 \sum_{m_i} \frac{(-1)^{m_3+m_4+m_5+m_6}}{\big[(m_1-\frac12)^2+m_2^2+m_3^2+m_4^2+m_5^2+m_6^2\big]^2} = \frac{\Gamma^4(\frac14)}{2\pi}. 
\end{equation}
A special case of Jacobi's triple product identity states that
\begin{equation} \label{triple}
 \theta_2 \theta_3 \theta_4 = \theta_1' = \sum_n 2n(-1)^n  q^{(n+1/2)^2},
\end{equation}
so the above evaluation of $L(1,1,4;2)$ may be rewritten as
\[ \sum_{m_i} \frac{m_1\, (-1)^{m_1+m_2+m_3+m_4}}{\big[(m_1+\frac12)^2+m_2^2+m_3^2+m_4^2\big]^2} = \frac{\Gamma^4(\frac14)}{4\pi}. \] 

Similarly, Proposition \ref{prop2} gives $2L(4,2,2;3) = L(2,2,4;3)$. The evaluation of the left hand side is given in \cite[Proposition 1]{wan}, and therefore we obtain an evaluation for the right hand side sum:
\begin{align}\nonumber
& \sum_{m_i} \frac{(-1)^{m_5+m_6+m_7+m_8}}{\big[(m_1-\frac12)^2+(m_2-\frac12)^2+m_3^2+m_4^2+m_5^2+m_6^2+m_7^2+m_8^2\big]^3} \\
= & \ \frac{\pi^4}{4}\,{_7F_6}\biggl({{\frac54,\frac12,\frac12,\frac12,\frac12,\frac12,\frac12}\atop{\frac14,1,1,1,1,1}};1\biggr).
\end{align}
\qede
\end{example}

\begin{remark}
We can evaluate $L(1,1,1;s)$ by applying the Mellin transform \eqref{mellin} to both sides of \eqref{triple}. Similarly, $L(1/3,1/3,1/3;s)$ can be found using the following version of Euler's pentagonal number theorem,
\[ \Big[\frac12 \theta_2 \theta_3 \theta_4\Big]^{1/3}= q^{1/12}\prod_{n=1}^\infty (1-q^{2n}) = \sum_n (-1)^n q^{(6n-1)^2/12}. \]
Both evaluations are recorded in Table \ref{table1}.

For $L(2,2,2;s)$, we may use the following series due to Hirschhorn \cite{hirschhorn},
\[ 2(\theta_2 \theta_3 \theta_4)^2 = \sum_{m,n} (-1)^{m+n} \, \mathrm{Re}\big[(10m+3+i(10n+1))^2\big] q^{((10m+3)^2+(10n+1)^2)/20}, \]
resulting in the evaluation 
\[ \mathrm{Re}\, \sum_{m,n} \frac{(-1)^{m+n}}{\big[10m+3 + i(10n+1)\big]^2} = \frac{1}{50}\int_0^1 \frac{K}{k'}\d k= \frac{\Gamma(\frac14)^4}{800\pi}.\]
\qede
\end{remark}

\begin{remark}
Proposition \ref{prop2} also leads to a number of functional equations for specific sums, for instance
\begin{align*}
(2\pi)^{2m-2s} L(m,m,2m;s) & = L(m,m,2m;2m-s), \\
(2\pi)^{3m-2s} L(m,m,4m;s) & = L(m,m,4m;3m-s).
\end{align*}
Following a similar proof procedure for Proposition \ref{prop2}, we may obtain many linear relations among sums, one example being
\[ 2^{s-m} L(2m,n,n+2;s)  = L(m,m+2,2n;s) - L(m+2,m,2n;s). \]
\qede
\end{remark}

\begin{example}
In \cite[proof of Theorem 1.1]{prs}, it is shown that for the function $f(\tau) = \eta^4(2\tau)\eta^4(4\tau)$ (here $\eta$ stands for the Dedekind eta function), an $L$-value yields the closed form
\begin{equation} \label{Lf4}
 L(f,4) = \frac{\pi^4}{192} \bigg[ {_5F_4}\bigg({{\frac12,\frac12,\frac12,\frac12,\frac12}\atop{1,1,1,\frac32}};1\bigg) + \frac{7\zeta(3)}{\pi^2}\bigg].
\end{equation}
In our notation, $L(f,4) = \frac{1}{16} L(4,2,2;4)$. Proposition \ref{prop2} gives $L(4,2,2;4) = \frac14 L(2,2,4;4)$, so we produce the following new 8-dimensional sum evaluation,
\[ \sum_{m_i} \frac{(-1)^{m_5+m_6+m_7+m_8}}{\big[(m_1-1/2)^2+(m_2-1/2)^2+m_3^2+m_4^2+\cdots+m_8^2\big]^4} = 64 L(f,4). \]
The corresponding $K$ integrals are
\[ \int_0^1 \frac{K'^3}{K}\mathrm d k = 4 \int_0^1 \frac{K^3}{K'}\mathrm d k = \frac{48}{\pi} L(f,4), \]
where $L(f,4)$ is given by \eqref{Lf4}. \qede
\end{example}

\subsection{10-dimensional sums} \label{sec:10d} Prior to this work, it seems that closed-form evaluations of lattice sums have been limited to dimensions 8 or less. In this section we give some 10-dimensional evaluations, namely equations \eqref{10d1} and \eqref{10d3}. Later, Remark \ref{rem5} provides another example.

One of the key results in \cite{rwz} is the evaluation \eqref{first}.
When translated into multiple sums using Proposition \ref{prop1}, it gives
\[ \frac{\Gamma^8\left(\frac14\right)}{48\pi^2} = 5 L(4,2,4;4) = L(2,4,4;4) = \frac18 L(1,1,8;4), \]
where we have used Proposition \ref{prop2} for the last inequality. As all three sums involved converge, we obtain the following 10-dimensional evaluations:
\begin{align} 5 & \nonumber 
\sum_{m_i} \frac{(-1)^{m_7+m_8+m_9+m_{10}}}{\big[(m_1-\frac12)^2+(m_2-\frac12)^2+(m_3-\frac12)^2+(m_4-\frac12)^2+m_5^2+m_6^2+\cdots+m_{10}^2\big]^4},
\\ \nonumber
 =& \  \sum_{m_i} \frac{(-1)^{m_7+m_8+m_9+m_{10}}}{\big[(m_1-\frac12)^2+(m_2-\frac12)^2+m_3^2+m_4^2+\cdots+m_{10}^2\big]^4}\\
=& \ \frac{1}{8}\, \sum_{m_i} \frac{(-1)^{m_3+m_4+\cdots +m_{10}}}{\big[(m_1-\frac12)^2+m_2^2+m_3^2+\cdots +m_{10}^2\big]^4} = \frac{\Gamma^8(\frac14)}{48\pi^2}. \label{10d1}
\end{align}

Another way to produce 10-dimensional sum evaluations is via $\theta_4^{10}$. It was essentially known to Liouville \cite{liouville} that
\[ \theta_4(q)^{10}-1 = \frac45\bigg[16\sum_{k>0} \frac{k^4 (-q)^k}{1+q^{2k}} + \sum_{k>0} \frac{(-1)^k (2k-1)^4 q^{2k-1}}{1+q^{2k-1}} + 2 \sum_{m,n}(m-ni)^4 (-q)^{m^2+n^2}\bigg]. \]
Taking the Mellin transform \eqref{mellin} to both sides, we find that
\begin{equation} \label{theta10}
{\sum_{m_i}}' \frac{(-1)^{m_1+\cdots+m_{10}}}{(m_1^2+\cdots+m_{10}^2)^s} = -\frac{4}{5}\biggl[\beta(s-4)\eta(s)+16\beta(s)\eta(s-4)-2 {\sum_{m,n}}' \frac{(-1)^{m+n}(m-ni)^4}{(m^2+n^2)^s}\biggr],
\end{equation}
where the notation ${\sum}'$ means the sum is taken over all values of the indices from $-\infty$ to $\infty$, omitting the single term when all the indices are simultaneously 0.

When $s=3$ or $4$, the double sum in \eqref{theta10} can be evaluated in terms of the Eisenstein series $E_4$, see \cite[proof of Theorem~3 and Example~4]{rwz} and also Section \ref{sec:eisen}. Consequently, we have the evaluations
\begin{align} \nonumber
{\sum_{m_i}}' \frac{(-1)^{m_1+\cdots+m_{10}}}{(m_1^2+\cdots+m_{10}^2)^3} & = -\frac{\pi^3}{10}-\frac{\Gamma^8(\frac14)}{120\pi^3}, \\ \label{10d3}
{\sum_{m_i}}' \frac{(-1)^{m_1+\cdots+m_{10}}}{(m_1^2+\cdots+m_{10}^2)^4} & = -\frac{7\pi^4}{1800}-\frac{32 \beta(4)}{5}-\frac{\Gamma^8(\frac14)}{400\pi^2}.
\end{align}

\section{Eisenstein series} \label{sec:eisen}

The Eisenstein series of weight $2n$, $E_{2n}(q)$, is defined by the sum
\begin{equation} \label{eisendef} E_{2n}(q) = \frac{1}{2\zeta(2n)} {\sum_{m_1, m_2}}' \frac{1}{(m_1+ m_2 \tau)^{2n}}, \quad \text{where} \quad q=e^{2\pi i \tau}. \end{equation}
(Here, the notation ${\sum}'$ again means the indices $m_1=m_2=0$ is omitted.)
$E_{2n}$ also admits the Lambert series
\begin{equation} \label{eisen}
 E_{2n}(q) = 1 - \frac{4n}{B_{2n}}\sum_{m=1}^\infty \frac{m^{2n-1}q^m}{1-q^m},
\end{equation}
where $q$ relates to $k$ via \eqref{qderiv} as usual, and $B_i$ denotes the $i$th Bernoulli number.

We state some standards facts about $E_{2n}$. Firstly, it is well-known that 
\begin{equation} \label{e4e6}
 E_{4}(q^2) =\frac{16}{\pi^4}(1-k^2+k^4)K^4, \quad E_6(q^2) = \frac{32}{\pi^6}(1+k^2)(1-2k^2)(2-k^2)K^6.
\end{equation}
(One way to prove this is by noting that both sides are  modular forms of the same weight, and their $q$-expansions agree to sufficiently many terms.)  Moreover, for any integer $n > 1$, $E_{2n}$ can be written as a rational, homogeneous polynomial $P_n(E_4,E_6)$, where each term of the polynomial has weight $2n$. These connections between Eisenstein series and $K$ are exploited in the following theorem.

\begin{theorem} \label{thmkeven}
For any integer $n>1$, there exists a closed form evaluation of the type 
\begin{equation}  \label{thm1eq}
\int_0^1 k\, p_n(k) K^{2n-1-s} K'^{s-1}\mathrm dk = \pi^{2n-1-s}\,\Gamma(s)\zeta(s+1-2n)\zeta(s),
\end{equation}
where $p_n$ is a computable, rational and even polynomial of degree no more than $2n-4$, satisfying $p_n(k)=p_n(k')$.
\end{theorem}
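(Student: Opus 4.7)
The plan is to apply the Mellin transform to $E_{2n}$ in two complementary ways and match them via the substitution $t=2\pi K'/K$. Viewing $E_{2n}$ as a function of the nome $e^{-t}$, the Lambert series \eqref{eisen} and the identity $\sum_{m\ge1}\sigma_{2n-1}(m)m^{-s}=\zeta(s)\zeta(s-2n+1)$ yield
$$\int_0^\infty t^{s-1}\bigl(E_{2n}(e^{-t})-1\bigr)\,dt = -\frac{4n\,\Gamma(s)}{B_{2n}}\,\zeta(s)\,\zeta(s-2n+1).$$
On the other hand, since $E_{2n}=P_n(E_4,E_6)$ is a rational homogeneous polynomial of weight $2n$, substituting \eqref{e4e6} expresses $E_{2n}(q^2)=\pi^{-2n}Q_n(k)K^{2n}$ with a computable $Q_n\in\mathbb{Q}[k^2]$.

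Next I would use modular symmetry to identify $p_n$. The transformation $E_{2n}(-1/\tau)=\tau^{2n}E_{2n}(\tau)$ translates under $\tau=iK'/K$ into $Q_n(k')=(-1)^nQ_n(k)$, and the boundary value $Q_n(0)=4^n$ is forced by $E_{2n}(0)=1$ and $K(0)=\pi/2$. For $n$ even, $Q_n$ is then a polynomial in the symmetric invariant $k^2k'^2$ with constant term $4^n$, so the decomposition $Q_n(k)=4^n+k^2k'^2\,p_n(k)$ produces a rational polynomial $p_n$ of degree at most $2n-4$ in $k$ satisfying $p_n(k)=p_n(k')$ by construction.

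Finally, I would convert the Mellin integral of the first step into a $k$-integral via $t=2\pi K'/K$, with the Jacobian supplied by \eqref{qderiv}. Substituting the algebraic form of $E_{2n}-1$ and isolating the $k^2k'^2p_n$ portion against the factor $K'^{s-1}/(kk'^2K^{s+1})$ produces precisely the target integrand $k\,p_n(k)K^{2n-1-s}K'^{s-1}$. The residual contribution from the $4^n$ constant, together with the ``$-1$'' in $E_{2n}-1$, becomes the Mellin transform of $(2K/\pi)^{2n}-1=\theta_3^{4n}-1$, which must be reconciled with the clean right-hand side using Proposition \ref{prop1} applied to a sum of the shape $L(2n,0,2n;s)$ (compare \eqref{t2t4}). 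The main obstacle is exactly this reconciliation: showing the $\theta_3^{4n}$-piece folds into a $\zeta(s)\zeta(s-2n+1)$-multiple with just the right normalization, so that only $\pi^{2n-1-s}\Gamma(s)\zeta(s)\zeta(s-2n+1)$ survives on the right. The parallel odd-$n$ case, in which $Q_n$ is antisymmetric under $k\leftrightarrow k'$, requires a distinct construction and is the subject of Theorem~\ref{thmkodd}.
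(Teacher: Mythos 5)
Your opening steps are sound: the Mellin transform of $E_{2n}(e^{-t})-1$ is indeed $-\tfrac{4n}{B_{2n}}\Gamma(s)\zeta(s)\zeta(s-2n+1)$, the identity $E_{2n}(q^2)=\pi^{-2n}Q_n(k)K^{2n}$ with $Q_n(k')=(-1)^nQ_n(k)$ and $Q_n(0)=4^n$ is correct, and for even $n$ the decomposition $Q_n=4^n+k^2k'^2p_n$ is legitimate. But the step you flag as ``the main obstacle'' is not a technicality to be reconciled later; it is where the argument breaks. The residual term is $\theta_3^{4n}-1$, whose Mellin transform is the Epstein zeta function of $\mathbb{Z}^{4n}$, i.e.\ the Dirichlet series of $r_{4n}(m)$. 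That series is a product of zeta functions only for $4n\le 8$: for $4n\ge 10$ the form $\theta_3^{4n}\in M_{2n}(\Gamma_0(4))$ has a nonzero cuspidal component (e.g.\ $\eta^{12}(2\tau)$ for $4n=12$, a two-dimensional cusp space for $4n=16$), so $M_s[\theta_3^{4n}-1]$ contains a cusp-form $L$-value. Consequently your identity for $\int_0^1 k\,p_n(k)K^{2n-1-s}K'^{s-1}\,\mathrm dk$ would carry an irremovable cusp-form term for every $n\ge 3$, and your $p_n$ (e.g.\ $p_4\propto 2-k^2+k^4$ versus the correct $2-17k^2+17k^4$) does not satisfy \eqref{thm1eq}. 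Your construction genuinely proves the theorem only for $n=2$.

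There are two further problems. First, the theorem covers all integers $n>1$, including odd $n$ (the paper's own first example beyond $n=2$ is $E_6$, giving \eqref{K4}); your deferral of odd $n$ to Theorem \ref{thmkodd} is a misreading, since that theorem concerns odd powers $K'^{4p-1}$, not odd $n$ here, and for odd $n$ your decomposition fails outright because $Q_n-4^n$ (e.g.\ $Q_3-64=-128k^2+32k^2k'^2(1-2k^2)$) is not divisible by $k^2k'^2$. Second, and this is the repair you are missing: instead of subtracting the constant $1$ and then the whole of $\theta_3^{4n}$, the paper subtracts $E_{2n}(-q)$ from $E_{2n}(q^2)$. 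Both are values of the same Eisenstein series, so the constants cancel while everything stays in the Eisenstein space; the Lambert-series difference collapses to $\tfrac{4n}{B_{2n}}\sum_{j\ge1} j^{2n-1}(-q)^j/(1-q^{2j})$, whose Mellin transform is exactly $-\tfrac{4n}{B_{2n}}(1-2^{-s})(1-2^{2n-s})\zeta(s+1-2n)\zeta(s)$, and the algebraic side $\pi^{-2n}Q_n(k)K^{2n}$ (with $E_{2n}(-q)$ expressed via $k(-q)=ik/k'$ and $K(k)=k'^{-1}K(ik/k')$) is automatically divisible by $k^2k'^2$ for all $n$, even or odd.
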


\begin{proof}
We give an effective way to find $p_n$. Our first aim is to cancel out the leading `1' in the Lambert series \eqref{eisen}.

Since $E_{2n}$ can be written in terms of $E_4$ and $E_6$, by \eqref{e4e6}, there exists a rational (computable) polynomial $P_n$ such that 
\begin{equation}\label{en1} E_{2n}(q^2) = P_n\Bigl(\frac{16}{\pi^4}(1-k^2+k^4)K^4, \, \frac{32}{\pi^6}(1+k^2)(1-2k^2)(2-k^2)K^6 \Bigr). \end{equation}
Using \eqref{eq1.3}, we can view $k$ as a function of $q$, namely $k(q) = \theta_2^2(q)/\theta_3^2(q)$. It follows by standard $\theta$ function identities that 
\begin{equation} \label{k12}
 k(q^{1/2}) = \frac{2 \sqrt{k(q)}}{1+k(q)}.
\end{equation}
(This is also the degree 2 modular equation, c.\,f.~\eqref{k1}.) Substituting $q \mapsto q^{1/2}$ in \eqref{en1}, applying \eqref{k12} and simplifying the results using \eqref{k1}, we obtain
\begin{equation}\label{en2}  E_{2n}(q) = P_n\Bigl(\frac{16}{\pi^4}(1+14k^2+k^4)K^4, \, \frac{64}{\pi^6}(1+k^2)(1-34k^2+k^4)K^6 \Bigr). \end{equation}
Similarly, $k(-q) = i k(q)/k'(q)$, and combined with Euler's hypergeometric transformation \cite[Theorem 2.2.5]{aar}, we have
\[ K(k) = \frac{1}{k'} K\left(\frac{ik}{k'}\right). \]
Therefore, equation \eqref{en2} leads to
\begin{equation}\label{en3}  E_{2n}(-q) = P_n\Bigl(\frac{16}{\pi^4}(1-16k^2+16k^4)K^4, \, \frac{64}{\pi^6}(1-2k^2)(1+32k^2-32k^4)K^6 \Bigr). \end{equation}
Now subtract \eqref{en3} from \eqref{en1} and appeal to \eqref{eisen}, the `1'  cancel out and we get
\begin{equation} \label{en4} E_{2n}(q^2)-E_{2n}(-q) = \frac{4n}{B_{2n}} \sum_{j=1}^\infty \frac{j^{2n-1}(-q)^j}{1-q^{2j}} = \pi^{-2n} Q_n(k) K^{2n}, \end{equation}
where $Q_n$ is a degree $2n$ rational polynomial completely determined by $P_n$.

Setting $q = e^{-t}$ in the second term of \eqref{en4}, we apply the Mellin transform \eqref{mellin} to it. Interchanging the order of integration and summation gives
\[ -\frac{4n}{B_{2n}}(1-2^{-s})(1-2^{2n-s})\zeta(s+1-2n)\zeta(s). \]
The corresponding transform of the rightmost term in \eqref{en4} can be found with help from \eqref{qderiv}. Equation \eqref{thm1eq} now follows after some simplifications; note that $p_n$ is determined by $Q_n$, and in turn by $P_n$.

The other claims in the theorem can be proven as follows.  We first show that $p_n$ is a polynomial, which amounts to checking that after multiplying by $\d q/\d k$, the rightmost term in \eqref{en4} remains a polynomial. Thus, we want to prove the claim that $Q_n(k)/(k k'^2)$ is a polynomial. Referring to the definition of $Q_n$ as a difference of two polynomials, we see that the claim is true since at $k=0$ and $\pm 1$, $(1-16k^2+16k^4)$ agrees with $(1-k^2+k^4)$ and $2(1-2k^2)(1+32k^2-32k^4)$ agrees with $(1+k^2)(1-2k^2)(2-k^2)$. The result can be written as $k \, p_n(k)$ since $Q_n$ is even in $k$, so another copy of $k$ can be factored out. The equality $p_n(k)=p_n(k')$ can be verified as the polynomials involved in \eqref{en1} and \eqref{en3} are all invariant under $k \mapsto k'$. Finally, the bound for the degree of $p_n$ holds because $P_n$ preserves weights.
\end{proof}

Note that there are many other ways to cancel out the `1' in \eqref{eisen}; Theorem \ref{thmkeven} simply provides clean results which directly give integrals for \textit{even} powers of $K$.

Using the functional equation for the $\zeta$ function and letting $s \to 1$ in Theorem \ref{thmkeven}, we find that
\begin{equation} \int_0^1 k\,p_n(k) K^{2n-2}\mathrm dk = r_n \, \zeta(2n-1) \end{equation}
for some rational constant $r_n$, while for $s=3, \, 5, \ldots, 2n-3$, the integral in Theorem \ref{thmkeven} vanishes.

\begin{example}
With $n=2$ in Theorem \ref{thmkeven}, we have $E_4 = P_2(E_4, E_6)$ so (obvious) $P_2(x,y)=x$, and $Q_2(k) = 240k^2 k'^2$. Therefore, \eqref{en4} takes the form
\begin{equation} \label{e4case}
  \frac{8}{B_{4}} \sum_{j=1}^\infty \frac{j^{3}(-q)^j}{1-q^{2j}} = 240k^2 k'^2 \, \pi^{-4}K^4.
\end{equation}
After applying the Mellin transform to both sides, we recover \eqref{t2t4}.

Using $E_6$ in Theorem \ref{thmkeven} and $P_3(x,y)=y$, we get
\begin{equation} \label{K4}
\int_0^1 k(1-2k^2)K^{5-s}K'^{s-1}\mathrm{d}k = \frac{\pi^{5-s}}2 \Gamma(s)\eta(s-5)\lambda(s),
\end{equation}
where we have rewritten $\zeta$  in terms of $\eta$ and $\lambda$ according to \eqref{eq1.4}. In the notation of \eqref{eq1.11}, this is an evaluation of $L(4,4,4;s)-2L(8,4,0;s)$.

When $s=-2$, $-1$, 0, 1 or 2 in \eqref{K4}, we obtain
\begin{align} \nonumber
\int_0^1 k(2k^2-1)\frac{K^7}{K'^3} \mathrm{d}k  = \frac{51}{256}\zeta(3)\pi^5, & \quad \int_0^1 k(2k^2-1)\frac{K^6}{K'^2} \mathrm{d}k  = \frac{1905}{64}\zeta(7), \\ \nonumber
\int_0^1 k(2k^2-1)\frac{K^5}{K'} \mathrm{d}k  & = \frac{\log 2}{16}\pi^5, \\
\int_0^1 k(2k^2-1)K^4 \mathrm{d}k  = \frac{93}{16}\zeta(5), & \quad
\int_0^1 k(2k^2-1)K^3K' \mathrm{d}k  = \frac{\pi^5}{128}. \label{k4nice}
\end{align}
The last two integrals in \eqref{k4nice} are particularly interesting;  they were also found by Zhou via very different methods\,--\,see \cite[equation~(63) and last equation]{zhou}. \qede
\end{example}

\begin{example}
Because $E_8 = E_4^2$, we have $P_4(x,y)=x^2$ in the notation of the  proof above. Theorem \ref{thmkeven} then gives
\begin{equation}
\int_0^1 k(2-17k^2+17k^4)K^{7-s}K'^{s-1}\mathrm{d}k = \frac{\pi^{7-s}}4 \Gamma(s)\eta(s-7)\lambda(s).
\end{equation}
In particular, when $s=1$,
\[ \int_0^1 k(2-17k^2+17k^4)K^6\mathrm{d}k = \frac{5715}{64}\zeta(7).\]

Since $E_{10}=E_4E_6$ (so $P_5(x,y)=xy$), we get from Theorem \ref{thmkeven}
\begin{equation}
\int_0^1 k(1-2k^2)(1-31k^2+31k^4)K^{9-s}K'^{s-1}\mathrm{d}k = \frac{\pi^{9-s}}{32} \Gamma(s)\eta(s-9)\lambda(s).
\end{equation}

As another example, the identity $691E_{12} = 441E_4^3+250E_6^2$ gives $P_6$, and leads to 
\begin{align} \nonumber
& \int_0^1 k(2-259k^2+1641k^4-2764k^6+1382k^8)K^{11-s}K'^{s-1}\mathrm{d}k \\
= & \ \frac{\pi^{11-s}}{64}\Gamma(s)\eta(s-11)\lambda(s).
\end{align}
\qede
\end{example}

\section{Modular forms} \label{sec:mod}

Equation \eqref{first} was the first closed form evaluation of the integral of a cubic in $K$ \cite{rwz, wan, zhou} (see also \cite{zhou2}). The proof given in \cite{rwz} relies on the fact that the function
\[ f_4(q) = \sum_{m,n} (m-in)^4 q^{m^2+n^2} \]
is a modular form. We now extend this idea to find closed form evaluations for integrals of \textit{odd} powers of $K$; this complements Theorem \ref{thmkeven} which deals with even powers.

\begin{theorem} \label{thmkodd}
For each integer $p \ge 1$, there exists a closed form evaluation of the type
\begin{equation}
\int_0^1 g_p(k) \, K'^{4p-1} \d k = \frac{\Gamma(\frac14)^{8p}}{\pi^{2p}},
\end{equation}
where $g_p$ is an effectively computable algebraic function.
\end{theorem}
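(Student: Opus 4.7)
My plan is to extend the CM modular-form argument of \cite{rwz} used to prove the case $p=1$ (i.e.\ \eqref{first}) by passing to higher-weight Hecke theta series on the Gaussian integers. For each $p \ge 1$, the natural candidate is
\[ F_p(q) := \sum_{m,n \in \mathbb Z} (m - in)^{4p}\, q^{m^2 + n^2}, \]
the unramified Hecke theta series of $\mathbb Z[i]$ attached to the character $\alpha \mapsto \alpha^{4p}$. By Hecke's classical theorem $F_p$ is a cuspidal modular form of weight $4p+1$ on a suitable congruence subgroup (such as $\Gamma_0(4)$ with character), and at $p=1$ it coincides up to normalization with the form $f_4$ used in \cite{rwz}.

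The first substantive step is to write
\[ F_p(q) = \pi^{-(4p+1)}\, K(k)^{4p+1}\, G_p(k), \]
with $G_p$ algebraic in $k$ (and $k'$). Classical theta-function theory identifies the graded ring of modular forms on $\Gamma(4)$ (or the relevant congruence subgroup containing $F_p$) with a quotient of $\mathbb{C}[\theta_2,\theta_3,\theta_4]$, so $F_p$ admits a polynomial expansion in $\theta_2,\theta_3,\theta_4$ of total degree $8p+2$, whose coefficients are uniquely pinned down by matching finitely many $q$-coefficients. Substituting \eqref{eq1.3} into each theta-monomial then converts the expression into $K^{4p+1}$ times an algebraic function of $k$.

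Now apply the Mellin transform \eqref{mellin} to both sides at $s = 4p$. With the substitution $t = \pi K'/K$ from \eqref{qderiv} (carried out exactly as in the proof of Proposition \ref{prop1}), the left-hand side collapses to a constant multiple of $\int_0^1 g_p(k)\,K'^{4p-1}\,\d k$; the exponent of $K$ in the transformed integrand equals $(4p+1) - 2 - (4p-1) = 0$ and hence disappears, which is precisely what forces the choice $s = 4p$. The right-hand side is the Dirichlet series
\[ \sum_{(m,n) \neq (0,0)} \frac{(m - in)^{4p}}{(m^2+n^2)^{4p}} \;=\; \sum_{\alpha \in \mathbb Z[i]\setminus\{0\}} \frac{1}{\alpha^{4p}} \;=\; 2\,\zeta(4p)\,E_{4p}(e^{-2\pi}), \]
having used $(m^2+n^2)=(m-in)(m+in)$ and \eqref{eisendef} at $\tau = i$. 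Since $E_{4p} \in \mathbb Q[E_4,E_6]$ and \eqref{e4e6} at the CM point $k=k'=1/\sqrt{2}$ gives $E_6(e^{-2\pi})=0$ together with $E_4(e^{-2\pi}) = 12\,K(1/\sqrt 2)^4/\pi^4$, only the pure $E_4^p$ term of the polynomial representation of $E_{4p}$ survives. Combining with $K(1/\sqrt{2}) = \Gamma(\tfrac14)^2/(4\sqrt{\pi})$ and $\zeta(4p) \in \pi^{4p}\mathbb Q$, the sum evaluates to a nonzero rational multiple of $\Gamma(\tfrac14)^{8p}/\pi^{2p}$. Equating the two evaluations and absorbing the rational constant into $g_p$ gives the theorem.

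The main obstacle is Step 2: although modular-form theory guarantees the decomposition $F_p = \pi^{-(4p+1)} K^{4p+1} G_p$ exists and is effectively computable, producing a clean closed form for $g_p$ as $p$ grows requires either a carefully chosen basis of the relevant modular-form space or, in the spirit of Section \ref{sec:eisen}, exploiting explicit polynomial identities relating $E_4$, $E_6$, and the theta constants, together with the degree-$2$ modular equation \eqref{k12} used to handle the ``$-q$'' reflection.
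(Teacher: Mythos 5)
Your proposal follows essentially the same route as the paper: the same Hecke theta series $f_{4p}(q)=\sum_{m,n}(m-in)^{4p}q^{m^2+n^2}$, the same weight-$(4p+1)$ modularity argument giving $f_{4p}=G_p(k)K^{4p+1}$ with $G_p$ algebraic, the same Mellin transform at $s=4p$ (chosen precisely so the power of $K$ disappears), and the same evaluation of the resulting Dirichlet series via $E_{4p}$ at $\tau=i$ using $E_6|_{k=1/\sqrt2}=0$ and $E_4|_{k=1/\sqrt2}=3\Gamma^8(\tfrac14)/(64\pi^6)$. The only (immaterial) difference is in computing $G_p$: you propose matching $q$-coefficients against a theta-monomial basis, whereas the paper expands $(m-in)^{4p}$ binomially and applies $q\,\frac{\d}{\d q}$ repeatedly to $\theta_3$, using \eqref{derivke} and \eqref{qderiv} and the guaranteed cancellation of $E$.
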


\begin{proof}
Consider the function 
\begin{equation} f_{4p}(q) := \sum_{m,n} (m-in)^{4p} q^{m^2+n^2}. \end{equation}
Note that $f_{4p}$ is real, as the imaginary part vanishes upon summation by symmetry.

Because $(m-in)^{4p}$ is harmonic -- its Laplacian with respect to $m$ and $n$ is 0, it follows from \cite[Part 1C]{zagier} that $f_{4p}$ is a modular (in fact, cusp) form of weight $4p+1$, with non-trivial character, on the congruence subgroup $\Gamma_0(4)$. 

Since $\frac2\pi K = \theta_3^2$ is a weight 1 modular form, it follows that $f_{4p}/K^{4p+1}$ has weight 0, which ensures that it is an algebraic function of $16k^{-2}$, the Hauptmodul for $\Gamma_0(4)$. To summarize, for an algebraic function $G_p$,
\begin{equation} \label{fmodular} f_{4p}(q) = G_p(k)\,  K^{4p+1}. \end{equation}

It remains to compute $G_p$. To do so we expand the real part of the summand of $f_{4p}$ binomially, 
\begin{equation}\label{fbinom} f_{4p}(q) = \sum_{b=0}^{2p} \binom{4p}{2b}(-1)^b \bigg\{\sum_m m^{4p-2b}q^{m^2}\bigg\}\bigg\{ \sum_n n^{2b}q^{n^2}\bigg\}. \end{equation}
Thus, it suffices for us to find an expression for $\sum_n n^{2b}q^{n^2}$, since all terms in the braces are of this form. But $\sum_n n^{2b}q^{n^2}$ can be calculated by applying the operator $q \frac{\d}{\d q}$ to $\theta_3(q)$ a total of $b$ times. To write this in terms of elliptic integrals, we use $\theta_3 = \sqrt{\frac2\pi \, K}$, $q \frac{\d}{\d q} = q  \frac{\d k}{\d q}\frac{\d}{\d k}$, and formulae \eqref{qderiv} and \eqref{derivke}.

This (tedious) computation for $\sum_n n^{2b}q^{n^2}$ produces algebraic functions of $k$, $K$ and $E$, where $E$ appears in the derivatives of $K$ and $E$ by \eqref{derivke}. However, when combined using \eqref{fbinom} to yield the final expression for $f_{4p}$, all the $E$'s are guaranteed to cancel out due to \eqref{fmodular}, and this combination gives us $G_p$.

We now take the Mellin transform of both sides of \eqref{fmodular}, with $q=e^{-t}$ and $s=4p$. The left hand side simplifies to an Eisenstein series $E_{4p}$ with $\tau = i$, which is a (computable) rational constant times a power of 
\[ E_4 \Big| _{k = 1/\sqrt2} = \frac{3\Gamma^8(\frac14)}{64\pi^6}, \]
as $E_6|_{k=1/\sqrt2}=0$ (see \eqref{e4e6} and surrounding discussion for these connections). The theorem now follows after some algebraic manipulations.
\end{proof}

\begin{example}
In Theorem \ref{thmkodd}, taking $p=1$ leads to \eqref{first}. For $p=2$, following the steps in the proof of the theorem, we have
\[ f_8(q) =  \frac{32k^2 k'^2 (4+k^2-k^4)}{\pi^9}K^9, \]
from which we obtain
\begin{equation} 
\int_0^1 k(4+k^2-k^4)K'^7\mathrm{d}k = \frac{3\, \Gamma^{16}(\frac14)}{2^{12}\,5\,\pi^4}.
\end{equation}
Similarly, for $p=3$, 
\begin{equation} 
\int_0^1 k(16-92k^2+93k^4-2k^6+k^8)K'^{11}\mathrm{d}k = \frac{189\, \Gamma^{24}(\frac14)}{2^{15}\,65\,\pi^6},
\end{equation}
and for $p=4$,
\begin{equation} 
\int_0^1 k(64+848k^2-2136k^4+2577k^6-1291k^8+3k^{10}-k^{12})K'^{15}\mathrm{d}k = \frac{43659\, \Gamma^{32}(\frac14)}{2^{21}\,85\,\pi^8}.
\end{equation}
It seems that in all cases, $g_p$ is actually an odd polynomial.
\qede
\end{example}

\begin{remark} \label{remarkodd}
We can exploit equation \eqref{e4e6} in many more ways to produce $K$ integrals. As one example, starting with
\[ g(q) = \sum_{m,n} \big(n-\sqrt{2}im\big)^4 q^{n^2+2m^2} = \sum_{m,n} (n^4-12m^2n^2+4m^4) q^{n^2+2m^2}, \]
we use the procedure (repeated applications of $q \frac{\d}{\d q}$) outlined in the proof of Theorem \ref{thmkodd} to write this as
\[ g(q) = \frac{\sqrt2}{\pi^5} \frac{k^2 k' [k^2(k'-2)+4(k'+1)]K^5}{(k'+1)^{3/2}}, \] 
where we have used $\sum_m q^{2m^2} = \theta_3(q^2) = \sqrt{\frac{1+k'}{\pi}K}$.
We now let $q = e^{-t}$ and apply the Mellin transform \eqref{mellin} with $s=4$ to both expressions. The resulting double sum,
\[ {\sum_{m,n}}' \frac{1}{(n+\sqrt 2\, i m)^4}, \]
can be evaluated using \eqref{e4e6} and a \textit{singular value} of $K$ (see \cite[Ch.~4]{AGM}), namely the closed forms
\begin{equation} \label{singular2}
 k(e^{-\sqrt 2 \pi}) = \sqrt2-1, \qquad K\big(k(e^{-\sqrt 2 \pi})\big) =  \sqrt{\frac{2+\sqrt2}{128\pi}}\,\Gamma\Big(\frac18\Big)\Gamma\Big(\frac38\Big). 
\end{equation}
After simplification, we produce the integral evaluation
\[ \int_0^1 \frac{2+3k-k^2}{\sqrt{1+k}}K^3\mathrm dk = \frac{\Gamma^4(\frac18)\Gamma^4(\frac38)}{384\sqrt2 \pi^2}. \]
Exactly the same procedure may be used to relate 
\[ {\sum_{m,n}}' \frac{1}{(n+\sqrt 2 i m)^{2p}}, \quad p = 2,3,4,\ldots\]
which has a closed form (by \eqref{e4e6}, subsequent discussion, and \eqref{singular2}), to an integral involving $K^{2p-1}$. For instance, when $p=3$, we have
\begin{equation} \label{first5}
\int_0^1 \frac{4-6k+5k^2+12k^3+k^4}{\sqrt{1+k}}K^5\mathrm dk = \frac{\Gamma^6(\frac18)\Gamma^6(\frac38)}{2304\sqrt2 \pi^3}.
\end{equation}
Indeed, we may obtain closed form evaluations for integrals containing $K^{4p+1}$ \textit{for all} $p \in \mathbb{N}$ this way, with \eqref{first5} being the first known example; this is a counterpart to Theorem \ref{thmkodd}, which deals with $K^{4p-1}$.

Here, we record a few more integrals obtained using variations of this method:
\begin{align}\nonumber
\int_0^1 \frac{1+14k^2+k^4}{\sqrt k}K'^5\mathrm dk =32\int_0^1 \frac{1+14k^2+k^4}{\sqrt k}K^5\mathrm dk & = \frac{3\Gamma^{12}(\frac14)}{32\pi^3}, \\ \nonumber
\int_0^1 (1-k^2-4k^4)K'^7\mathrm dk = -\frac{120}7\int_0^1 (1-k^2-4k^4)K^7\mathrm dk & = \frac{9\Gamma^{16}(\frac14)}{4096\pi^4},\\
\int_0^1 \frac{(1+14k^2+k^4)^2}{\sqrt k}K'^9\mathrm dk =512\int_0^1 \frac{(1+14k^2+k^4)^2}{\sqrt k}K^9\mathrm dk & = \frac{189\Gamma^{20}(\frac14)}{128\pi^5}.
\end{align}
\qede
\end{remark}
% Actually, instead of using $\theta_3(q^2)$, it's neater to use $\theta_2(q)$ and $\theta_4(q)$

\begin{remark} \label{rem5}
Though the details are omitted here, we should mention that it is fruitful to consider more general Eisenstein series than the definition \eqref{eisendef}; for instance, one generalization involves twisting the numerator 1 in \eqref{eisendef} by $\chi_a(m_1)\chi_b(m_2)$, where $\chi_a, \, \chi_b$ are Dirichlet characters. At suitable $\tau$, modular theory tells us that the ratio of such a construction over $E_{2n}$ is algebraic (and computable). As one example, using $\chi_{-4}(m_1)\chi_{-8}(m_2)$ (whose values are implicitly given by $L_{-4}$ and $L_{-8}$ in \eqref{eq1.4}), we can use the procedure in the proof of  Theorem \ref{thmkodd} to ultimately deduce
\begin{equation} \int_0^1 k^{1/4}k'^{1/2}K^3\mathrm dk = \frac{(\sqrt2-1)^{3/2}}{128\sqrt2\,\pi^2}\Gamma^8\Big(\frac14\Big), \end{equation}
which, by \eqref{eq1.13} and \cite[equation (1.3.8)]{Lbook}, can be converted into the 10-dimensional sum
\[ \sum_{m_i, n_i} \frac{(-1)^{m_1+\cdots+m_5}}{\big[2m_1^2+\cdots +2m_5^2 + (n_1-\frac12)^2+\cdots +(n_5-\frac12)^2\big]^4} = \frac{(\sqrt2-1)^{3/2}}{48\sqrt2\,\pi^2}\Gamma^8\Big(\frac14\Big). \]
\qede
\end{remark}

\section{Examples involving $E$}

Integrals involving $E$ can be obtained by differentiating $q$-identities involving $K$. We give one worked example here.

As usual, with $k = \theta_2^2(q)/\theta_3^2(q)$, we differentiate both sides of \eqref{e4case} with respect to $q$. With the help of equations \eqref{derivke} and \eqref{qderiv}, we obtain
\[ \sum_{j=1}^\infty \frac{j^4(1+q^{2j}) (-q)^j}{(1-q^{2j})^2} = -\frac{4k^2 k'^2}{\pi^6}K(k)^5 (2E(k)-K(k)).\]
Next, with $q=e^{-t}$, we compute the Mellin transform \eqref{mellin} of both sides above. The result is
\begin{equation} \label{eeg}
 \int_0^1 k K'^s K^{3-s}(2E-K)\mathrm dk = \frac{\pi^{4-s}}{2}\Gamma(s+1)\eta(s-3)\lambda(s), 
\end{equation}
which, after applying \textit{Legendre's relation} $EK'+E'K-KK'=\frac{\pi}2$, generalizes an identity in \cite[Remark after proof of Prop.~5.1]{zhou}. 

It is clear that this procedure can be applied to every $K$ integral in this paper. We remark that there are many other ways to construct $E$ integrals, relying on for instance Legendre's relation, though we do not pursue such paths here; instead, only some aesthetically pleasing examples are mentioned below.

\begin{example}
It is known (see e.\,g.~\cite{zucker}) that
\[ \frac{4}{\pi^2} k^2 K^2 =\theta_2^4(q) = 16 \sum_{n=0}^\infty \frac{(2n+1)q^{2n+1}}{1-q^{4n+2}}, \]
so we can differentiate both sides with respect to $q$ and take the Mellin transform, ending up with  
\[\int_0^1 \frac{E'}{k}\frac{K^s}{K'^{s-1}}\mathrm dk = 2\pi^{2-s} \Gamma(s+1)\lambda(s-1)\lambda(s). \]
Similarly, starting from the $q$-series for $\theta_2^2\theta_4^4$ \cite{zucker}, we get
\[ \int_0^1 K^{2-s} K'^s(3E-2K)\mathrm dk = \frac{\pi^{3-s}}2 (2^s-1)\Gamma(s+1)\beta(s-2)\zeta(s), \]
which, when combined with \eqref{first}, has the special evaluation at $s=3$:
\[  \int_0^1 \frac{E K'^3}{K}\mathrm dk = \frac{\Gamma^8(\frac14)}{192\pi^2}+\frac{7\pi}{4}\zeta(3). \]
Starting from the $q$-series for $\theta_2^3\theta_4^3$ \cite{zucker}, we can obtain
\[ \int_0^1 \sqrt{\frac{k}{k'}}K'^2 (2E-K)\mathrm d k= \frac{\pi^3}{6\sqrt2}.\]
Our final example involves double differentiation. We apply the operator $q \frac{\mathrm d }{\mathrm dq}$ twice to the identity
\[ \frac{2}{\pi} k K =  \theta_2^2(q) = 4q^{1/2} \sum_{n=0}^\infty \frac{q^n}{1+q^{2n+1}}, \]
followed by taking the Mellin transform. After simplification, we have
\begin{equation}
 \int_0^1 \frac{2E'^2-k^2 K'^2}{kk'}K^s K'^{1-s}\mathrm dk = 2^{s-1}\pi^{3-s}\Gamma(s+1)\lambda(s-1)\beta(s-1).
\end{equation}
At $s=3$, aided by \eqref{first} we deduce
\[ \int_0^1 \frac{E'^2 K^3}{kk'K'^2}\mathrm dk = \frac{3}{2}\pi^2G+\frac{\Gamma^8(\frac14)}{256\pi^2}. \]
\qede
\end{example}

\section{Conclusion}

We have only scratched the surface of the many rich identities that can be found. Even though our methods do not simplify all integrals of the form \eqref{eq1.2}, we have reduced many sets of them into known multiple sum evaluations, and hence results such as \eqref{first} no longer seem mysterious or isolated. Conversely, we have produce new  lattice sums evaluations from $K$ integrals, for instance in Example \ref{eg5} and Section \ref{sec:10d}.

Among the new $K$ integrals produced, we have shown that for each $n \in \mathbb{N}$, there is at least one computable algebraic function $r_n$ such that 
\[ \int_0^1 r_n(k) K(k)^n\mathrm dk \]
can be evaluated in closed form; the proof of this general result is achieve through Theorem \ref{thmkeven}, Theorem \ref{thmkodd}, and Remark \ref{remarkodd}.

\acknowledgements We would like to thank Jon Borwein, Heng Huat Chan and Armin Straub for useful discussions.

\newpage

\begin{center}
{ \renewcommand{\arraystretch}{1.75}
\begin{longtable}{|c|c|c|}   \hline 
\textbf{Source} & \textbf{Lattice sum} & \textbf{Integral} \\ \hline
$\theta_2$ & $2^{2s+1}\lambda(2s)$  & $ \f{\pi^{s+1/2}}{\Gamma(s)}\i1 \f{1}{\sqrt{2k}k'^2K^{3/2}}\k$ \\ \hline
$(\theta_2\theta_4)^{1/2}$ & $2^{3s+1/2}L_8(2s)$  & $ \f{\pi^{s+1/2}}{\sqrt{2}\Gamma(s)}\i1 \f{1}{k^{3/4}k'^{7/4}K^{3/2}}\k$ \\ \hline
$(\theta_2\theta_3\theta_4)^{1/3}$ & $2^{1/3}12^s L_{12}(2s)$  & $ \f{\pi^{s+1/2}}{\sqrt{2}\Gamma(s)}\i1 \f{1}{k^{5/6}k'^{11/6}K^{3/2}}\k$ \\ \hline
$\theta_3^{1/2}(\theta_2\theta_3\theta_4)^{1/6}$ & $2^{1/6}24^s L_{24}(2s)$  & $  \f{\pi^{s+1/2}}{\sqrt2\Gamma(s)}\i1 \f{1}{k^{11/12}k'^{23/12}K^{3/2}}\k$ \\ \hline
$\theta_2^2$ & $2^{s+2}\lambda(s)\beta(s)$  & $ \f{\pi^s}{\Gamma(s)}\i1 \f{1}{k'^2K}\k$ \\ \hline
$\theta_2\theta_3$ & $2^{2s+1}\lambda(s)\beta(s)$ & $ \f{\pi^s}{\Gamma(s)}\i1 \f{1}{\s{k}k'^2K}\k$ \\ \hline
$\theta_2\theta_4$ &  $2^{2s+1}L_8(s)L_{-8}(s)$ & $\f{\pi^{s}}{\Gamma(s)}\i1 \f{\s{kk'}}{kk'^2K}\k$ \\  \hline
$\theta_2(q^2)\theta_3$ &  $2^{s+1}\lambda(s) L_{-8}(s)$ & $\f{\pi^s}{\Gamma(s)} \int_0^1 \frac{\sqrt{(1-k')/2}}{kk'^2K} \k$ \\ \hline
$\theta_2(q^2)\theta_4$ &  $2^{s+1}\beta(s) L_{8}(s)$ & $\f{\pi^s}{\Gamma(s)} \int_0^1 \frac{\sqrt{(1-k')/2}}{kk'^{3/2}K} \k$ \\ \hline
$\theta_3^2-\theta_3\theta_3(q^2)$ &  $2\zeta(s)(2\beta(s)-L_{-8}(s))$ & $\f{\pi^s}{\Gamma(s)} \int_0^1 \frac{1-\sqrt{(1+k')/2}}{kk'^2K} \k$ \\ \hline
$\theta_3^2-\theta_4^2$ &  $8\lambda(s)\beta(s)$ & $\f{\pi^s}{\Gamma(s)}\i1 \f{1-k'}{kk'^2K}\k$ \\ \hline
$\theta_2\theta_3\theta_4$ & $2^{2s+1}\beta(2s-1)$ & $\f{\sqrt{2}\pi^{s-1/2}}{\Gamma(s)}\i1 \f{1}{\s{k k'^3 K}}\k$ \\ \hline
$\theta_2\theta_3(\theta_2\theta_3\theta_4)^{1/3}$ & $3^{s}(1+2^{2-2s})L_{-3}(2s-1)$  &  $ \f{\pi^{s-1/2}}{2^{5/6}\Gamma(s)}\i1 \f{1}{k^{1/3}k'^{11/6}\s{K}}\k$ \\ \hline
$\theta_2\theta_4(\theta_2\theta_3\theta_4)^{1/3}$ & $3^{s}L_{-3}(2s-1)$ & $ \f{\pi^{s-1/2}}{2^{5/6}\Gamma(s)}\i1 \f{1}{k^{1/3}k'^{4/3}\s{K}}\k$ \\ \hline
$\theta_3^{2}(\theta_2\theta_4)^{1/2}$ &  $8^{s}L_{-8}(2s-1)$ & $ \f{\pi^{s-1/2}}{\Gamma(s)}\i1 \f{1}{k^{3/4}k'^{7/4}\s{K}}\k$ \\ \hline
$\theta_3^{5/2}(\theta_2\theta_3\theta_4)^{1/6}$ & $24^{s}L_{-24}(2s-1)$  &  $ \f{\pi^{s-1/2}}{\Gamma(s)}\i1 \f{2^{1/3}}{k^{11/12}k'^{23/12}\s{K}}\k$ \\ \hline
$\theta_4^{5/2}(\theta_2\theta_3\theta_4)^{1/6}$ & $24^{s}(1+2^{1-2s})L_{-3}(2s-1)$  &  $  \f{\pi^{s-1/2}}{\Gamma(s)}\i1 \f{2^{1/3}}{k^{11/12}k'^{2/3}\s{K}}\k$ \\ \hline
$\theta_2^4$ & $16\lambda(s) \lambda(s-1)$ & $\f{2\pi^{s-1}}{\Gamma(s)}\i1 \f{k}{k'^2}\k$ \\ \hline
$\theta_2^3\theta_3$ &  $4^s\left[\lambda(s)\lambda(s-1)-\beta(s)\beta(s-1)\right]$ &  $\f{2\pi^{s-1}}{\Gamma(s)}\i1 \f{\s{k}}{k'^2}\k$ \\ \hline
$\theta_2^2\theta_3^2$ &  $2^{s+2}\lambda(s) \lambda(s-1)$ &  $\f{2\pi^{s-1}}{\Gamma(s)}\i1 \f{1}{k'^2}\k$ \\ \hline
$\theta_2\theta_3^3$ &  $4^s\left[\lambda(s)\lambda(s-1)+\beta(s)\beta(s-1)\right]$ & $\f{2\pi^{s-1}}{\Gamma(s)}\i1 \f{\s{k}}{kk'^2}\k$ \\ \hline
$\theta_2^3\theta_4$ &  $4^s\left[L_{-8}(s)L_{-8}(s-1)-L_{8}(s)L_{8}(s-1)\right]$ & $\f{2\pi^{s-1}}{\Gamma(s)}\i1\s{ \f{k}{k'^3}}\k$ \\ \hline
$\theta_2^2\theta_4^2$ &  $2^{s+2}\beta(s)\beta(s-1)$ &  $\f{2\pi^{s-1}}{\Gamma(s)}\i1 \f{1}{k'}\k$ \\ \hline
$\theta_2\theta_4^3$ &  $4^s\left[L_{-8}(s)L_{-8}(s-1)+L_{8}(s)L_{8}(s-1)\right]$ & $\f{2\pi^{s-1}}{\Gamma(s)}\i1\f{1}{\sqrt{kk'}}\k$ \\ \hline
$\theta_3^4 - \theta_3^2\theta_4^2$  & $8(1+2^{2-s})\lambda(s)\lambda(s-1)$ & $\frac{2\pi^{s-1}}{\Gamma(s)} \int_0^1 \frac{k}{k'^2(1+k')} \k$  \\ \hline
$\theta_2^6$ &  $2^{s+2}\left[\lambda(s-2)\beta(s)-\beta(s-2)\lambda(s)\right]$ &  $\f{4\pi^{s-2}}{\Gamma(s)}\i1 \f{k^2K}{k'^2}\k$ \\ \hline
$\theta_2^4\theta_3^2$ &  $16\zeta(s-2)\beta(s)$ &  $\f{4\pi^{s-2}}{\Gamma(s)}\i1 \f{kK}{k'^2}\k$ \\ \hline
$\theta_2^3\theta_3^3$ &  $2^{2s-1}\bigl[\lambda(s-2)\beta(s)-\beta(s-2)\lambda(s)\bigr]$ &  $\f{4\pi^{s-2}}{\Gamma(s)}\i1 \f{\s{k}K}{k'^2}\k$ \\ \hline
$\theta_2^2\theta_3^4$ &  $2^{s+2}\beta(s)\lambda(s-2)$ &  $\f{4\pi^{s-2}}{\Gamma(s)}\i1 \f{K}{k'^2}\k$ \\ \hline
$\theta_2^4\theta_4^2$ &  $16\eta(s-2)\beta(s)$ &    $\f{4\pi^{s-2}}{\Gamma(s)}\i1 \f{kK}{k'}\k$ \\ \hline
$\theta_2^3\theta_4^3$ & $  2^{2s-1}\bigl[L_{8}(s)L_{-8}(s-2)-L_{8}(s-2)L_{-8}(s)\bigr]$ & $\f{4\pi^{s-2}}{\Gamma(s)}\i1\s{ \f{k}{k'}}K\k$ \\ \hline
$\theta_2^2\theta_4^4$ &  $2^{s+2}\beta(s-2)\lambda(s)$ &  $\f{4\pi^{s-2}}{\Gamma(s)}\i1 K\k$ \\ \hline
$\theta_3^6-\theta_4^6$ &  $32\beta(s)\lambda(s-2)-8\beta(s-2)\lambda(s)$ &  $\f{4\pi^{s-2}}{\Gamma(s)}\i1 \f{(1-k'^3)K}{kk'^2}\k$  \\ \hline
$\theta_4^6-\theta_3^2\theta_4^4$  & $8\lambda(s)\beta(s-2)-16\eta(s-2)\beta(s)$ & $\frac{4\pi^{s-2}}{\Gamma(s)}\int_0^1 \frac{k'-1}{k}K\k$ \\ \hline
$\theta_2^8$ & $2^{8-s}\zeta(s-3)\lambda(s)$ & $\f{8\pi^{s-3}}{\Gamma(s)}\i1\f{k^3}{k'^2}K^2\k$ \\ \hline
$\theta_2^4\theta_3^4$ &  $16\zeta(s-3)\lambda(s)$ &  $\f{8\pi^{s-3}}{\Gamma(s)}\i1\f{k}{k'^2}K^2\k$ \\ \hline
$\theta_2^4\theta_4^4$ &  $16\eta(s-3)\lambda(s)$ & $\f{8\pi^{s-3}}{\Gamma(s)}\i1kK^2\k$  \\ \hline 
$\theta_3^8-\theta_4^8$ &  $32\lambda(s)\lambda(s-3)$ &  $\f{8\pi^{s-3}}{\Gamma(s)}\i1 \f{k(2-k^2)K^2}{k'^2}\k$ \\ \hline
\caption{Integrals of $K$ from various $\theta$ products.} \label{table1} 
\end{longtable} }
\end{center}


\begin{thebibliography}{99}

\bibitem{aar}
\textsc{G.\,E.~Andrews, R.~Askey}, and \textsc{R.~Roy}, \emph{Special functions}, in: Encyclopedia of Mathematics and its Applications \textbf{71} (Cambridge University Press, 2001).

\bibitem{bbbg}
\textsc{D.\,H.~Bailey, J.\,M.~Borwein, D.\,J.~Broadhurst}, and \textsc{M.\,L.~Glasser},
Elliptic integral evaluations of Bessel moments and applications, \textit{J. Phys. A.} \textbf{41} (2008), 5203--5231.

\bibitem{AGM}
\textsc{J.\,M.~Borwein} and \textsc{P.\,B.~Borwein},
\emph{Pi and the AGM: A study in analytic number theory and computational complexity}
(Wiley, New York, 1987).

\bibitem{Lbook}
\textsc{J.\,M.~Borwein, M.\,L.~Glasser, R.\,C.~McPhedran,  J.\,G.~Wan} and \textsc{I.\,J.~Zucker}, 
\emph{Lattice sums then and now}, in: Encyclopedia of Mathematics and its Applications \textbf{150} (Cambridge University Press,  2013). 

\bibitem{walk2}
\textsc{J.\,M.~Borwein, A.~Straub} and \textsc{J.\,G.~Wan}, Three-step and four-step random walk integrals,
\emph{Experiment. Math.} \textbf{22} (2013), 1--14.

\bibitem{hirschhorn}
\textsc{M.\,D.~Hirschhorn}, A simple proof of an identity of Ramanujan, \emph{J. Australian Math. Soc. Ser. A} \textbf{34} (1983),
31--35.

\bibitem{liouville}
\textsc{J. Liouville}, Nombre des repr\'esentations d'un entier quelconque sous la forme d'une somme de dix carr\'es, \emph{Journal de math\'ematiques pures et appliqu\'ees 2$^e$ s\'erie} \textbf{11} (1866), 1--8. 

\bibitem{prs}
\textsc{M.\,A.~Papanikolas, M.~Rogers} and \textsc{D.~Samart}, The Mahler measure of a Calabi-Yau three fold and special $L$-values, \emph{Math. Z.} \textbf{276} (2014), no.~3-4, 1151--1163.

\bibitem{ramazeta}
\textsc{M.~Rogers}, Identities for the Ramanujan zeta function, \emph{Adv. Appl. Math.} \textbf{51} (2013), no.~2, 266--275.

\bibitem{rwz}
\textsc{M.~Rogers, J.\,G.~Wan} and  \textsc{I.~J.~Zucker}, Moments of elliptic integrals and critical $L$-values, \emph{Ramanujan J.}, to appear (2014).

\bibitem{wan}
\textsc{J.\,G.~Wan},
Moments of products of elliptic integrals, \textit{Adv. Appl. Math.} \textbf{48} (2012),  121--141.

\bibitem{zagier}
\textsc{D.~Zagier}, Introduction to modular forms,
in \textit{From Number Theory to Physics}, eds.~M.~Waldschmidt et al (Springer-Verlag, Heidelberg 1992), 238--291.

\bibitem{zucker}
\textsc{I.\,J.~Zucker}, Exact results for some lattice sums in 2, 4, 6 and 8 dimensions,
\emph{J. Phys. A: Math. Nucl. Gen.} \textbf{7} (1974), no.~13, 1568--1575.

\bibitem{watson}
\textsc{I.\,J.~Zucker}, $70+$ years of the Watson integrals, \textit{J. Stat. Phys.} \textbf{145} (2011), 591--612.

\bibitem{zhou} 
\textsc{Y.~Zhou}, Legendre functions, spherical rotations, and multiple elliptic integrals, 
\textit{Ramanujan J.} \textbf{34} (2014), 373--428.

\bibitem{zhou2} 
\textsc{Y.~Zhou}, On some integrals over the product of three Legendre functions, preprint \texttt{arXiv:1304.1606 [math.CA]} (2013).

\end{thebibliography}
\end{document}